\newtheorem{theorem}{Theorem}[section]
\newtheorem{lemma}[theorem]{Lemma}
\newtheorem{corollary}[theorem]{Corollary}
\theoremstyle{definition}
\newtheorem{definition}[theorem]{Definition}
\newtheorem{remark}[theorem]{Remark}
\newtheorem{example}[theorem]{Example}
\newcommand{\N}{\mathbb{N}}
\newcommand{\C}{{\mathbb{C}}}
\newcommand{\mA}{{\cal A}} 
\newcommand{\mB}{{\cal B}}
\newcommand{\mC}{{\cal C}} 
\newcommand{\mF}{{\cal F}}
\DeclareMathOperator{\rank}{rank}
\title{\large\bf Succinct Encodings of Binary Trees with Application to AVL Trees}
\author{Jeremy Chizewer, Stephen Melczer, J. Ian Munro, and Ava Pun }
\date{}
\begin{document}
\maketitle

\begin{abstract}
We use a novel decomposition to create succinct data structures -- supporting a wide range of operations on static trees in constant time -- for a variety tree classes, extending results of Munro, Nicholson, Benkner, and Wild. Motivated by the class of AVL trees, we further derive asymptotics for the information-theoretic lower bound on the number of bits needed to store tree classes whose generating functions satisfy certain functional equations. In particular, we prove that AVL trees require approximately $0.938$ bits per node to encode. 
\end{abstract}

Designing a data structure often requires a trade-off between the amount of space used to store objects in the structure and the time needed to compute with the objects. Generally speaking, increasing the information stored by a data structure makes it easier (and faster) to manipulate or determine properties of its objects. The use of tree-like structures to store information that can be efficiently searched goes back at least to the invention of the \emph{binary search tree} in the early 1960s~\cite{Windley1960,BoothColin1960,Hibbard1962}. Although simple to describe and implement, the worst-case search cost for a binary search tree with $n$ nodes is $O(n)$. Adelson-Velsky and Landis~\cite{Adelson-VelskiiLandis1962} proved that it is possible to maintain trees with logarithmic update cost and worst-case logarithmic search cost through the use of \emph{AVL trees}, a subclass of binary search trees with logarithmic height that is maintained with updates during insertions and deletions in logarithmic time. AVL trees are the oldest class of binary search trees maintaining logarithmic height, and are characterized by the key property that any pair of sibling subtrees differ in height by at most $1$. Since the introduction of AVL trees, many classes of balanced trees have been proposed as data structures, including the family of \emph{B-trees}~\cite{BayerMcCreight1970} and special cases of \emph{binary B-trees}~\cite{Bayer1971a} and \emph{red-black trees}~\cite{GuibasSedgewick1978}. Sedgewick~\cite{Sedgewick2008} introduced \emph{Left-Leaning Red Black Trees} to break symmetries and simplify implementation (see also Andersson~\cite{Andersson1993} for a similar strategy on binary B-trees). A modern study of balanced trees using a generalized notion of \emph{rank} can be found in Haeupler et al.~\cite{HaeuplerSenTarjan2015}.

\section{Succinct Data Structures and Tree Enumeration}

If $\mC=\bigcup_{n=0}^\infty\mC_n$ is a family of objects, with $\mC_n$ denoting the objects of size $n$ in $\mC$, then a representation of $\mC$ is called \emph{succinct} if it maps each object of $\mC_n$ to a unique string of length $\log_2 |\mC_n| + o(\log |\mC_n|)$. A succinct representation is thus one whose space complexity asymptotically equals, up to lower-order terms, the information-theoretic lower bound. A \emph{succinct data structure}~\cite{DBLP:reference/crc/Munro004, Navarro2016} for $\mC$ is a succinct representation of $\mC$ that supports a range of operations on $\mC$ under reasonable time constraints.

\subsection{Succinct Representations of Binary Search Trees}

To illustrate succinct data structures, let $\mB$ be the class of rooted planar binary trees, so that the number $|\mB_n|$ of objects in $\mB$ of size $n$ is the $n$th Catalan number $b_n = \frac{1}{n+1}\binom{2n}{n}$. As noted above, the class $\mB$ lends itself well to storing ordered data in a structure called a \emph{binary search tree}. The general idea is that for each node in the tree, the data stored in its left subtree will be smaller than the data at that node, and the data stored in the right subtree will be larger. To retrieve elements, one can recursively navigate through the tree by comparing the desired element to the current node, and moving to the left or right subtree if the element is respectively smaller or larger than the current node. As a result, it is desirable to efficiently support the navigation operations of moving to parent or child nodes in whatever representation is used.  

A naive representation of $\mB$ gives each node a label (using roughly $\log_2 n$ space) and stores the labels of each node's children and parent. The resulting data structure supports operations like finding node siblings in constant time, but is \emph{not} succinct as it uses $\Theta(n\log n)$ bits while the information-theoretic lower bound is only $\log_2(b_n) = 2n + o(n)$. Somewhat conversely, a naive space-optimal representation of $\mB$ is obtained by listing the objects of $\mB_n$ in any canonical order and referencing a tree by its position $\{1,\dots,b_n\}$ in the order, but asking for information like the children or parents of a node in a specific tree is then expensive as it requires building parts of the tree. 

\begin{figure}[t]\centering
\begin{tikzpicture}[level distance=32pt,
every node/.style={circle,draw,inner sep=1pt, minimum size=1.5em, fill=black},
dummy/.style={rectangle,minimum width=1em, fill=white},
level 1/.style={sibling distance=200pt},
level 2/.style={sibling distance=100pt},
level 3/.style={sibling distance=50pt},
level 4/.style={sibling distance=25pt}
]
\node[fill=black] {{\textcolor{white}{1}}}
    child {node {\textcolor{white}{2}} 
        child {node {\textcolor{white}{4}} 
            child {node {\textcolor{white}{8}} 
                child {node[dummy] {12}}
                child {node[dummy] {13}}
            }
            child {node {\textcolor{white}{9}} 
                child {node {\textcolor{white}{14}} 
                    child {node[dummy] {18}}
                    child {node[dummy] {19}}
                }
                child {node[dummy] {15}}
            }
        }
        child {node[dummy] {5}}
    }
    child {node {\textcolor{white}{3}} 
        child {node[dummy] {6}}
        child {node {\textcolor{white}{7}} 
            child {node {\textcolor{white}{10}} 
                child {node[dummy] {16}}
                child {node[dummy] {17}}
            }
            child {node[dummy] {11}}
        }
    }
;
\end{tikzpicture}

\bigskip

\begin{tikzpicture}[>=stealth, start chain, node distance=-0.5pt]
\node[on chain, draw, style=circle, fill=black, minimum size=1.2em]{};
\node[on chain] (h) {= original node};
\node[on chain] (h) {};
\node[on chain] (h) {};
\node[on chain, draw, minimum size=1.0em]{};
\node[on chain] (h) {= external node};
\end{tikzpicture}

\medskip

\begin{tikzpicture}[
list/.style={rectangle, draw, minimum width=1.5em, minimum height=14pt}, >=stealth, start chain, node distance=-0.5pt,
every label/.style={draw=none,minimum width=0pt,text=blue,font=\tt\footnotesize},
label distance=0mm,
]
\node[on chain] (h) {Level-order bitmap:};
\node[on chain] (h) {};
\node[list,on chain,label={above:1}] (A) {1};
\node[list,on chain,label={above:2}] (B) {1};
\node[list,on chain,label={above:3}] (C) {1};
\node[list,on chain,label={above:4}] (D) {1};
\node[list,on chain,label={above:5}] (E) {0};
\node[list,on chain,label={above:6}] (F) {0};
\node[list,on chain,label={above:7}] (G) {1};
\node[list,on chain,label={above:8}] (H) {1};
\node[list,on chain,label={above:9}] (I) {1};
\node[list,on chain,label={above:10}] (J) {1};
\node[list,on chain,label={above:11}] (K) {0};
\node[list,on chain,label={above:12}] (L) {0};
\node[list,on chain,label={above:13}] (M) {0};
\node[list,on chain,label={above:14}] (N) {1};
\node[list,on chain,label={above:15}] (O) {0};
\node[list,on chain,label={above:16}] (P) {0};
\node[list,on chain,label={above:17}] (Q) {0};
\node[list,on chain,label={above:18}] (R) {0};
\node[list,on chain,label={above:19}] (S) {0};
\end{tikzpicture}
\caption{A binary tree and its level-order bitmap representation.}
\label{fig:succinct-binary-tree} 
\end{figure}
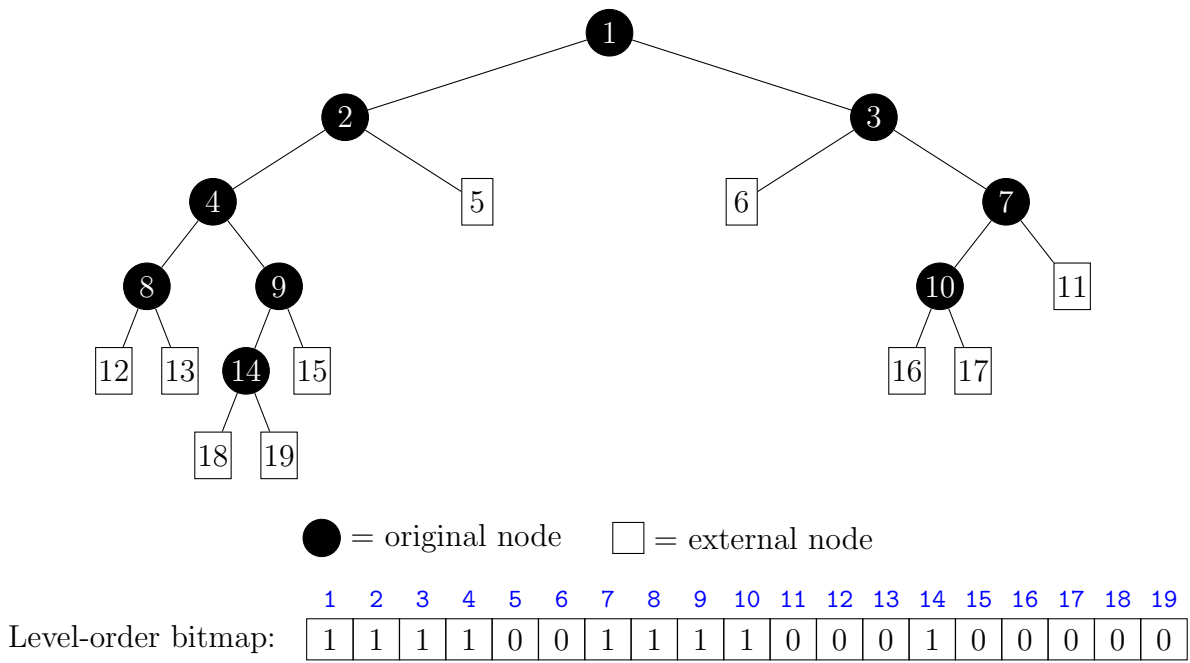

Practical succinct representations of binary trees supporting efficient navigation date back to Jacobson~\cite{Jacobson1989}, who encoded a tree by storing the binary string of length $2n+1$ obtained by adding \emph{external} vertices so that every node has exactly two children, then taking a level-order traversal of the tree and recording a 1 for each original \emph{internal} node encountered and a 0 for each external node encountered~(see Figure \ref{fig:succinct-binary-tree}). If each node is labelled by its position in a level-order traversal then, for instance, the children of the node labelled $x$ in the tree encoded by the string $\sigma$ have labels $2\rank_x(\sigma)$ and $2\rank_x(\sigma)+1$, where $\rank_x(\sigma)$ is the number of ones in $\sigma$ up to (and including) the position $x$. By storing $o(n)$ bits, the rank operation (and similar supporting operations used to retrieve information about the trees) can be implemented in $O(1)$ time. Jacobson's results allow finding a parent or child using $O(\log_2 n)$ bit inspections; Clark \cite{ClarkDavid1997} and Munro \cite{DBLP:conf/fsttcs/Munro96} improved this to $O(1)$ inspections of $\log_2 n$ bit words.

\subsection{Succinct Representations of Tame Tree Classes}

Let $B$ be a function satisfying $B(n) =\Theta(\log n)$. In \cite{MunroNicholsonBenknerWild2021} the authors construct a succinct\footnote{The lower bound of $\log_2 |\mC_n|$ space corresponds to trees that are sampled from a \emph{uniform} distribution. By adapting compression methods for trees, the data structures of Munro et al. actually use less than $\log_2 |\mC_n|$ bits of space for families of trees that are `repetitive' (i.e., have distributions dissimilar to the uniform distribution). Hence, Munro et al. call their data structures \emph{hypersuccinct}.} data structure for any class of (rooted planar) binary trees ${\cal T}$ satisfying the following four conditions.  
\begin{enumerate}
\item \emph{Fringe-hereditary}: For any tree $\tau \in \cal {\cal T}$ and node $v \in \tau$, the \emph{fringe subtree} $\tau[v]$, which consists of $v$ and all of its descendants in $\tau$, also belongs to $\cal T$. 
\item \emph{Worst-case $B$-fringe dominated}: Most nodes in members of ${\cal T}$ do not generate large fringe subtrees, in the sense that 
\[ {\Big|}\{v\in \tau : {\big|}\tau[v]{\big|} \;\geq\; B(n)\} {\Big|} = o(n/ \log B(n)) \]
for every binary tree $\tau$ in the subset ${\cal T}_n \subset\cal T$ containing the members of $\cal T$ with $n$ nodes, where $|\tau|$ denotes the number of nodes in $\tau$ and the constant in the little-$o$ term is independent of $\tau$. 
\item \emph{Log-linear}: There is a constant $c > 0$ and a function $\vartheta(n)=o(n)$ such that 
\begin{equation} 
\log_2|{\cal T}_n| = c n + \vartheta (n).
\label{eq:loglinear}
\end{equation}
\item \emph{$B$-heavy twigged}: If $v$ is a node of any $\tau \in {\cal T}$ with $|\tau[v]| \geq B(n)$, and $\tau_\ell[v]$ and $\tau_r[v]$ are the left and right subtrees of $v$ in $\tau$, then $|\tau_\ell[v]|, |\tau_r[v]| = \omega(1)$.
\end{enumerate} 

A class of binary trees is called \emph{tame} if it satisfies these properties.

\begin{table}[ht]
\begin{centering}
\begin{tabular}{|ll|}
\hline
\texttt{parent}$(v)$ & the parent of $v$, same as \texttt{anc}$(v, 1)$\\
\texttt{degree}$(v)$ & the number of children of $v$\\
\texttt{left\_child}$(v)$ & the left child of node $v$\\
\texttt{right\_child}$(v)$ &the right child of node $v$\\
\texttt{depth}$(v)$ & the depth of $v$, i.e., the number of edges between the root and $v$\\
\texttt{anc}$(v, i)$ & the ancestor of node $v$ at depth $\texttt{depth}(v) - i$\\
\texttt{nbdesc}$(v)$ & the number of descendants of $v$\\
\texttt{height}$(v)$ & the height of the subtree rooted at node $v$\\
\texttt{LCA}$(v, u)$ & the lowest common ancestor of nodes $u$ and $v$\\
\texttt{leftmost\_leaf}$(v)$ & the leftmost leaf descendant of $v$\\
\texttt{rightmost\_leaf}$(v)$ & the rightmost leaf descendant of $v$\\
\texttt{level\_leftmost}$(\ell)$ & the leftmost node on level $\ell$\\
\texttt{level\_rightmost}$(\ell)$ & the rightmost node on level $\ell$\\
\texttt{level\_pred}$(v)$ & the node immediately to the left of $v$ on the same level\\
\texttt{level\_succ}$(v)$ & the node immediately to the right of $v$ on the same level\\
\texttt{node\_rank$_X$}$(v)$ & the position of $v$ in the $X$-order, 
$X \in \{\texttt{PRE, POST, IN}\}$,\\ 
&i.e., in a preorder, postorder, or inorder traversal of the tree\\
\texttt{node\_select$_X$}$(i)$ & the $i$th node in the $X$-order, 
$X \in \{\texttt{PRE, POST, IN}\}$\\
\texttt{leaf\_rank}$(v)$ & the number of leaves before and including $v$ in preorder\\
\texttt{leaf\_select}$(i)$ & the $i$th leaf in preorder\\
\hline
\end{tabular}
\caption{Operations discussed in \cite{He2007, MunroNicholsonBenknerWild2021} which can be done in $O(1)$ time in the $(\log n)$-bit word RAM model in a succinct encoding of a binary tree.  } 
\label{tab:ops}
\end{centering}
\end{table}

\begin{theorem}[{Munro et al.~\cite{MunroNicholsonBenknerWild2021}}] \label{thm:succinct_old}
If $\cal T$ is a tame class of binary trees then there exists a succinct encoding for $\cal T$ that supports the operations in Table~\ref{tab:ops} in $O(1)$ time using the $(\log n)$-bit word RAM model.
\end{theorem}

As described in Section~\ref{sec:our_results} below, in this paper we prove that Theorem~\ref{thm:succinct_old} holds under a weakening of its conditions (see Theorem~\ref{thm:succinct}). 

\subsection{Enumeration of Tree Classes}

In order to deduce the space complexity of a succinct encoding of a family of objects $\mC$, it is necessary to enumerate the number of objects $c_n$ of size $n$ in $\mC_n$. More precisely, a succinct encoding of $\mC$ will use $(\log_2 \rho)n + o(n)$ bits where $\rho = \limsup_{n\rightarrow\infty}|c_n|^{1/n}$ is the (finite in our cases) \emph{exponential growth} of $c_n$. An extremely powerful tool for enumeration is the \emph{generating function}
\[ C(z) = \sum_{n \geq 0}c_nz^n, \]
whose series coefficients are the number of objects of size $n$ in $\mC$. Although the theory of formal power series is well-understood, in this paper all generating functions converge in a neighbourhood of the origin in $\C$, where they represent complex analytic functions. The theory of \emph{analytic combinatorics}~\cite{FlajoletSedgewick2009,Melczer2021} allows one to -- automatically, in many cases -- translate information about the analytic properties of $C(z)$ into a characterization of asymptotics for $c_n$. The ``first principal'' of analytic combinatorics states that the exponential growth of $c_n$ satisfies $\rho = 1/M,$ where $M$ is the smallest modulus among the singularities of $C$.

The mathematical study of trees dates back to work of Cayley~\cite{Cayley1857,Cayley1859} in the 1850s, which already illustrated the usefulness of generating functions by deriving the generating function
\[ B(z) = \sum_{n \geq 0} b_nz^n = \frac{1-\sqrt{1-4z}}{2z} \]
for the number $b_n$ of binary trees on $n$ nodes, and proving the Catalan expression\footnote{Cayley's original ``expression for the general term''~\cite[Page 378]{Cayley1859} was $b_n = \frac{1(3)(5)\cdots(2n-3)}{1(2)(3)\cdots(n)}2^{n-1}$. At that time, Cayley seemed unaware of other combinatorial interpretations of the Catalan numbers.}
\[ b_n = \frac{1}{n+1}\binom{2n}{n}. \]

Generating functions are a powerful tool for the enumeration of tree classes, and the optimal bitsize needed to encode a family of trees that can be recursively decomposed into \emph{independent} subtrees of the same type can often be enumerated automatically. For instance, if $\Omega \subset \N$ is any (not necessarily finite) set of natural numbers containing $0$ then recursively decomposing a tree implies that the generating function $T_{\Omega}(z)$ for the class of trees where every node has a number of children in $\Omega$ satisfies
\[ T_{\Omega}(z) = z \phi\left(T_{\Omega}(z)\right),  \]
where $\phi(t) = \sum_{a \in \Omega}t^a$ encodes the elements of $\Omega$. In simple cases Lagrange inversion~\cite{Gessel2016} can give a closed form for the number of such trees, but even in general an analytic study of this equation implies~\cite[Proposition IV.5]{FlajoletSedgewick2009} the exponential growth $\rho=\phi(\tau)/\tau$, where $\tau$ is the smallest positive solution of
\[ \frac{t\phi'(t)}{\phi(t)} = 1. \]

When the subtrees of a node are \emph{dependent}, as they are for height balanced trees, things get harder. For instance, 2-3 trees (which are $B$-trees of order 3) were introduced by John Hopcroft in unpublished work from 1970. A 2-3 tree can be viewed as a binary tree whose non-leaf nodes have degrees 2 or 3 with the \emph{additional constraint} that all leaves have the same height. While studying different representations of 2-3 trees, Miller et al.~\cite{MillerPippengerRosenbergSnyder1979} gave the recursive formula
\[ p_n = \sum_{2k+3m=n}\binom{k+m}{k}p_{k+m} \]
for the number $p_n$ of 2-3 trees with $n$ leaves, from which they deduced that the exponential growth of $p_n$ is the golden ration $\phi=(1+\sqrt{5})/2$. A few years later, Odlyzko~\cite{Odlyzko1982} used the functional equation
\[ P(z) = z + P(z^2+z^3), \]
which uniquely characterizes the generating function $P(z)$ of $p_n$, to perform a detailed singularity analysis and prove that
\[ p_n \sim \frac{\phi^n}{n}u(\log n) \]
for a $\log(4-\phi)$-periodic analytic function $u$. In fact, Odlyzko derived the asymptotic behaviour for any sequence (under some mild conditions) whose generating function satisfies a functional equation of the form $P(z) = a(z) + P(b(z))$ for polynomials $a$ and $b$ with real nonnegative coefficients; this includes the sequences enumerating $B$-trees of order $m$ by number of leaves, for any $m\geq3$, and the sequence enumerating red-black trees. Further work on functional equations of this form, and their applications, can be found in Teufl~\cite{Teufl2007}.

\subsubsection{Enumerating AVL Trees}

Recall from above that \emph{AVL trees}~\cite{Adelson-VelskiiLandis1962} have balancing rules that force the subtrees rooted at the children of any node differ in height by at most one. Throughout this paper we let $\mA$ denote the class of AVL trees, so that $\mA_n$ consists of all binary trees on $n$ vertices such that the subtrees of any vertex differ in height by at most one (including empty subtrees). Let $a_n = |\mA_n|$ be the counting sequence of $\mA$ and let $A(z) = \sum_{n \geq 0}a_nz^n$ be its associated generating function.  

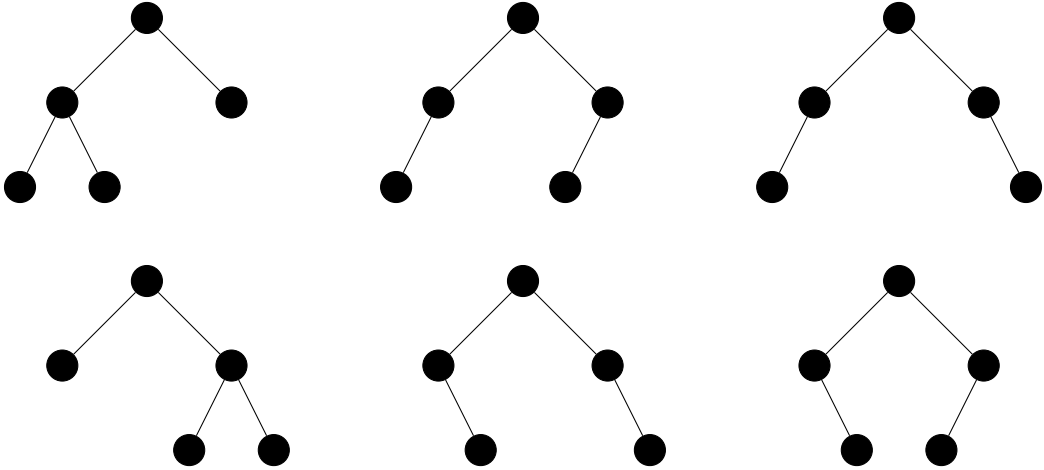
\begin{figure}[t] \centering
\begin{tikzpicture}[level distance=32pt,
every node/.style={circle, draw, inner sep=1pt, fill=black, minimum size=1em},
dummy/.style={rectangle,draw=none,minimum width=1em},
level 1/.style={sibling distance=64pt},
level 2/.style={sibling distance=32pt},
level 3/.style={sibling distance=16pt}
]
\node {}
    child {node {} 
        child {node {}}
        child {node {}}
    }
    child {node {}}
;
\node at (5,0) {}
    child {node {} 
        child {node {}}
        child {edge from parent[draw=none]}
    }
    child {node {}
        child {node {}}
        child {edge from parent[draw=none]}
    }
;
\node at (10,0) {}
    child {node {} 
        child {node {}}
        child {edge from parent[draw=none]}
    }
    child {node {}
        child {edge from parent[draw=none]}
        child {node {}}
    }
;
\node at (0, -3.5) {}
    child {node {}}
    child {node {} 
        child {node {}}
        child {node {}}
    }
;
\node at (5, -3.5) {}
    child {node {}
        child {edge from parent[draw=none]}
        child {node {}}
    }
    child {node {} 
        child {edge from parent[draw=none]}
        child {node {}}
    }
;
\node at (10, -3.5) {}
    child {node {}
        child {edge from parent[draw=none]}
        child {node {}}
    }
    child {node {} 
        child {node {}}
        child {edge from parent[draw=none]}
    }
;
\end{tikzpicture}
\caption{The six types of AVL trees with $n=5$ nodes.}
\label{fig:avltrees-5}
\end{figure}

Because the subtrees of a node in an AVL tree are restricted by height, we let $A_h(z)$ be the generating function for the subclass of AVL trees with height $h$. The balance condition on subtrees implies that an AVL tree of height $h+2$ is a root together with a subtree of height $h+1$ and a subtree of height either $h+1$ or $h$, giving rise to the recursive equation 

\begin{equation} 
A_{h+2}(z)=A_{h+1}(z)(A_{h+1}(z)+2A_h(z))
\label{eq:AVLrec} 
\end{equation}
for all $h \geq 0$, where the factor of $2$ indicates that the shorter subtree can be on the left or right side. This recursion, along with the initial conditions $A_0(z)=z$ (encoding the only AVL tree with height zero, which is a single vertex) and $A_1(z)=z^2$ (encoding the only AVL tree with height one, which is a root with two children), uniquely determines $A_h(z)$ for all $h$. 

Due to the way they are constructed, and because the height of an AVL tree is the worst-case cost to search for one of its elements, AVL trees have previously been studied under height restrictions. Any binary tree with $n$ vertices has height at least $\log_2(n+1)$, and the original paper introducing AVL trees proved~\cite[Lemma 1]{Adelson-VelskiiLandis1962} that an AVL tree with $n$ vertices has height at most $\log_{\frac{1+\sqrt{5}}{2}}(n+1) < (3/2)\log_2(n+1)$. If $e_h=A_h(1)$ is the number of AVL trees of height $h$ then~\eqref{eq:AVLrec} gives the non-linear recurrence
\[ e_{h+2} = e_{h+1}^2 + 2e_{h+1}e_h, \]
with initial conditions $e_0=e_1=1$. Solving this recurrence gives the doubly exponential growth $e_h \sim \lfloor \theta^{2^h}\rfloor$ for a constant $\theta = 1.436\dots$ (see Knuth~\cite[Exercise 6.2.3.7]{Knuth1998a} or Aho and Sloane~\cite{AhoSloane1973}), while the average number of vertices in an AVL tree of height $h$ is $B'_h(1)/B_h(1) \sim \beta2^h$ for a constant $\beta=0.701\dots$ (see Knuth~\cite[Exercise 6.2.3.8]{Knuth1998a} or Khizder~\cite{Khizder1966}).

To enumerate AVL trees by number of vertices, as we must to characterize the space complexity of a succinct encoding, we sum over all possible heights to get the generating function
\[A(z)=\sum_{h=0}^\infty A_h(z).\]
The fact that recurrence~\eqref{eq:AVLrec} involves height-restricted AVL trees, while we sum over all heights to obtain $A(z)$, makes an analysis of the number of AVL trees difficult. In a survey following his tour-de-force asymptotic analysis of $B$-trees, Odlyzko~\cite{Odlyzko1984} stated that the generating function of AVL trees `appears not to satisfy any simple functional equation, but by an intensive study$\dots$it can be shown' that $|\mA_n| \sim n^{-1}\alpha^{-n}u(\log n)$ where $\alpha = 0.5219\dots$ is `a certain constant' and $u$ is a periodic function, referencing for details a paper that was planned to be published but was never written\footnote{The current authors thank Andrew Odlyzko for discussions on the asymptotic behaviour of AVL trees and the growth constant $\alpha$.}. As described in Theorem~\ref{AVL} below, we provide the first published proof of the growth constant $\alpha$ (and a general method to derive the exponential growth for sequences whose generating functions satisfy similar functional equations).

\section{Our Results}
\label{sec:our_results}

In this work\footnote{A shortened version of the present article was published in the 35th International Conference on Probabilistic, Combinatorial and Asymptotic Methods for the Analysis of Algorithms (AofA 2024)~\cite{ChizewerMelczerMunroPun2024}.} we present a new construction that gives a succinct encoding for all classes of trees satisfying only the first three conditions for tameness. By using constant time rank and select operations already supported by a succinct encoding for binary trees, we can also eliminate the use of so-called ``portal nodes'' and thus relax the second condition to the following.
\begin{enumerate}
\item[2$^\prime$.\hspace{-0.5em} ] \emph{Worst-case weakly fringe dominated:} Most nodes in members of ${\cal T}$ do not generate large fringe subtrees, in the sense that there is a $B^\prime(n)$ satisfying $B^\prime(n) = d\log_2 n +o(\log n)$, for some $d<1$, such that
\begin{equation}
{\Big|}\{v\in \tau : {\big|}\tau[v]{\big|} \;\geq\; B^\prime(n)\} {\Big|} = o(n)
\label{eq:weakfringe}
\end{equation}
for every binary tree $\tau \in {\cal T}_n$. 
\end{enumerate}

We call a class of binary trees \emph{weakly tame} if it is fringe-hereditary, worst-case weakly fringe dominated, and log-linear.  

\begin{theorem}\label{thm:succinct}
If $\cal T$ is a weakly tame class of binary trees then there exists a succinct encoding for $\cal T$ that supports the operations in Table~\ref{tab:ops} in $O(1)$ time using the $(\log n)$-bit word RAM model.
\end{theorem}

\begin{proof}
See Section~\ref{sec:encoding}.
\end{proof}

\begin{remark}
We support operations on static trees, leaving extensions to trees with updates (such as in~\cite{DBLP:conf/soda/MunroRS01}) to future work.
\end{remark}

As an immediately corollary, we obtain a succinct data structure for AVL trees.

\begin{corollary}\label{coro:AVL}
There exists a succinct encoding for AVL trees that supports the operations in Table~\ref{tab:ops} in $O(1)$ time using the $(\log n)$-bit word RAM model.
\end{corollary}

Corollary~\ref{coro:AVL} was noted in~\cite[Example F.2]{MunroNicholsonBenknerWild2021} (under the stronger conditions of Theorem~\ref{thm:succinct_old}), however those authors inferred log-linearity of AVL trees from the unproved statement in Odlyzko \cite{Odlyzko1984}. Here we give a rigorous derivation of the exponential growth for the number of AVL trees. Indeed, Equation~\eqref{eq:AVLrec} implies that $A_h(z)$ is a non-constant polynomial with positive coefficients for all $h$, so the equation $A_h(z)=1/3$ has a unique positive solution for all $h \in \mathbb{N}$ (see Figure~\ref{fig:alphas} for values of these solutions). 

\begin{theorem}
\label{AVL}
If $\alpha_h$ is the unique positive solution to $A_h(z)=1/3$ then the limit 
\[ \alpha=\lim_{h\to\infty}\alpha_h=0.5219\ldots\] 
exists. Furthermore, 
\[\log_2(a_n) = \underbrace{n\log_2(\alpha^{-1})}_{n(0.938\ldots)}  + \log_2 s(n) \]
for a function $s$ growing at most sub-exponentially (meaning $s(n) = o(\kappa^n)$ for all $\kappa>1$).
\end{theorem}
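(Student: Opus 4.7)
The plan is to recast the recurrence~\eqref{eq:AVLrec} as a discrete dynamical system and extract the growth of $a_n$ from its long-term behaviour. For each fixed $z$, the pair $(A_h(z), A_{h+1}(z))$ is the orbit of $(z, z^2)$ under the map $T(x, y) = (y, y(y+2x))$ on $\mathbb{R}_{\geq 0}^2$. Setting $y_h = y_{h+1} = y^*$ in~\eqref{eq:AVLrec} gives the fixed point equation $y^* = 3(y^*)^2$, so the unique nontrivial fixed point is $p^* = (1/3, 1/3)$, which is precisely why the definition of $\alpha_h$ features the value $1/3$. The Jacobian $DT(p^*) = \bigl(\begin{smallmatrix} 0 & 1 \\ 2/3 & 4/3 \end{smallmatrix}\bigr)$ has eigenvalues $(2 \pm \sqrt{10})/3$, one of modulus larger than $1$ and one of modulus less than $1$, so $p^*$ is a hyperbolic saddle with a smooth one-dimensional local stable manifold $W^s$ through it by the stable manifold theorem.

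The key dynamical claim is that for every $z > 0$, the orbit of $(z, z^2)$ either (i) is attracted super-exponentially to $(0,0)$ with summable coordinates, (ii) escapes to $+\infty$ in finitely many iterations, or (iii) lies on $W^s$ and converges to $p^*$; moreover case (iii) occurs at a unique threshold, which I take to be $\alpha$. The two easy regimes are clear: for small $z$ both coordinates start close to the origin, at which $DT$ is nilpotent and so the orbit dies very rapidly; for $z \geq 1$ the explicit value $A_2(z) = z^4 + 2z^3 \geq 3$ already exceeds $1$, from which the iterates blow up. Because $T$ is coordinate-wise monotone on the positive cone and $\Gamma(z) = (z, z^2)$ is increasing in $z$, the sets of $z$ falling in cases (i) and (ii) form complementary left and right intervals of $(0,\infty)$ sharing a single boundary value $\alpha$; at $z = \alpha$ the orbit is bounded away from both $0$ and $\infty$, and the saddle geometry forces it onto $W^s$. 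Convergence $\alpha_h \to \alpha$ is then a compactness argument: the defining equation $A_h(\alpha_h) = 1/3$ puts $T^h(\Gamma(\alpha_h))$ on the horizontal line $\{x = 1/3\}$, so any accumulation point $\alpha'$ of $(\alpha_h)$ seeds an orbit with infinitely many iterates in a bounded region around $p^*$ and thus itself lies on $W^s$, forcing $\alpha' = \alpha$.

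Transferring this to $A(z) = \sum_{h \geq 0} A_h(z)$ gives the growth of $a_n$. For $z \in (0, \alpha)$ the orbit's super-geometric decay makes $A(z)$ absolutely convergent, so $A$ is analytic in $|z| < \alpha$ and $\limsup_n a_n^{1/n} \leq 1/\alpha$ by Cauchy--Hadamard; at $z = \alpha$, $A_h(\alpha) \to 1/3$ and hence $A(\alpha) = +\infty$, so the radius of convergence is exactly $\alpha$. For the matching $\liminf$, the identity $\sum_n a_{n,h}\alpha_h^n = 1/3$ combined with $\deg A_h \leq 2^{h+1}-1$ yields, for each large $h$, a size $n^* = n^*(h) = O(2^h)$ with $a_{n^*, h} \geq \alpha_h^{-n^*}/O(n^*)$. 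The super-multiplicative bound $a_{2m+1, h+1} \geq a_{m, h}^2$, from joining two AVL trees of equal height at a fresh root, together with a Fekete-style interpolation (using that the argmax height of $a_{m,\cdot}$ is concentrated at $\Theta(\log m)$ so that the one-apart height constraint is satisfied for typical pairs), propagates these ``good'' sizes to all large $n$ and yields $\log_2 a_n = n \log_2(\alpha^{-1}) + o(n)$.

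The hardest step I anticipate is the global dichotomy in the second paragraph: while the extremal regimes are easy, showing that the ``die'' and ``escape'' sets form connected intervals of $(0,\infty)$ meeting at a single threshold requires globalizing the local saddle picture at $p^*$. The natural approach is to trap $W^s$ locally inside two forward-invariant cones and pull them back under the coordinate-wise monotone dynamics to propagate the cone structure over the whole positive quadrant. If the global topology turns out to be more subtle than this sketch suggests, a scalar fallback is to work directly with the sequences $A_h(\alpha_h)$ and $A_h(\alpha_{h-1})$ (together with the associated ratios), deducing convergence of $\alpha_h$ by a contraction estimate; this avoids the two-dimensional geometric picture at the cost of somewhat heavier calculation but encodes the same saddle-point content.
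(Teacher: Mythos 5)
Your route is genuinely different from the paper's, and it is worth spelling out where the two diverge and where your sketch has gaps.

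\textbf{How the paper proceeds.} Theorem~\ref{AVL} is a one-line corollary of a general result (Theorem~\ref{thm:general-theo}) about recursions $F_h = f(F_{h-1},\dots,F_{h-c})$ of arbitrary depth $c$ with a ``recursive-dependent'' polynomial $f$. The existence of $\alpha = \lim \alpha_h$ (Lemma~\ref{lem:alphaexists}) is proved by extracting two subsequences --- the indices of the local maxima, resp.\ minima, of $\alpha_h$ over sliding windows of length $c$ --- showing one is non-increasing and the other non-decreasing via the monotonicity of polynomials with non-negative coefficients, and then using recursive-dependence to force the two limits to agree. For the exponential growth, the paper bounds $F_{ch+k}(z)$ above by $\lambda_h C$ with $\lambda_h \le \lambda (A + \lambda - A\lambda)^h$ (where $A$ is the sum of the linear coefficients of $f$), giving geometric summability of $\sum_h F_h(z)$ for $|z| < \alpha$; the exponential order then drops out of Cauchy--Hadamard alone. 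No dynamical-systems machinery appears anywhere.

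\textbf{Where your argument is over-engineered.} The stable-manifold and hyperbolicity analysis of $T(x,y) = (y, y^2+2xy)$ at $p^* = (1/3,1/3)$ (your Jacobian and eigenvalues are correct) is not actually doing work for you. What your second paragraph really needs is just a dichotomy ``orbit dies vs.\ orbit escapes'' with a single threshold, and this can be obtained without invoking $W^s$ at all: coordinatewise monotonicity of $T$ on the positive cone plus $\Gamma(z) = (z,z^2)$ increasing in $z$ gives that the decay and escape sets are complementary intervals; the decay set is open and nonempty because $DT(0,0)$ is nilpotent (once both coordinates drop below, say, $1/4$, the next coordinate is at most $3 \cdot (1/4)^2 < 1/4$); the escape set is open and nonempty because $A_{h+2} \ge A_{h+1}^2$ forces escape once some iterate exceeds $1$. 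The convergence $\alpha_h \to \alpha$ then follows cleanly by monotonicity: for $z < \alpha$ one eventually has $A_h(z) < 1/3$, hence $\alpha_h > z$, and symmetrically for $z > \alpha$. Your ``compactness'' argument as phrased is muddled --- $A_{h_k}(\alpha_{h_k}) = 1/3$ does not control $A_{h_k}(\alpha')$ for an accumulation point $\alpha'$ --- but the monotone version just given repairs it. Note also that your 2D picture is specific to depth-2 recursions; the paper's argument handles arbitrary $c$.

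\textbf{Where your argument under-delivers and where it over-delivers.} The ``hardest step'' you flag -- that the decay and escape intervals together exhaust $(0,\infty)$ except for a single threshold -- is a genuine gap you leave open, and it is exactly the content the paper handles via the recursive-dependent hypothesis; this is where the claim would actually fail for $f(x,y) = y$ (the $\alpha_h$ need not converge because even and odd indices decouple). On the other side, your fourth paragraph tries to produce a matching lower bound $\liminf a_n^{1/n} \ge 1/\alpha$ via $a_{2m+1,h+1} \ge a_{m,h}^2$ and ``Fekete-style interpolation with concentration of the argmax height.'' That interpolation is far from automatic --- you would need to prove the height-concentration claim, and then stitch sizes together despite the $\pm 1$ height constraint --- but more to the point it is not needed: the theorem as stated only asserts $\theta(n) = a_n \alpha^n = o(\kappa^n)$ for every $\kappa > 1$, i.e., an upper bound on the exponential order. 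The radius of convergence equalling $\alpha$ already delivers $\limsup a_n^{1/n} = 1/\alpha$, which is all the paper proves or claims. A matching termwise lower bound is what would be required for Odlyzko's full asymptotic $a_n \sim n^{-1}\alpha^{-n} u(\log n)$, which the paper explicitly postpones.

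\textbf{Summary.} Your dynamical-systems framing is a legitimately different lens and, once the stable-manifold apparatus is replaced by the elementary monotone dichotomy, it could be completed into a correct proof of the limit $\alpha = \lim \alpha_h$ for the depth-2 case. As written, the trichotomy/threshold uniqueness is a real gap, the compactness argument needs to be replaced by a monotonicity argument, and the super-multiplicative lower-bound machinery is trying to prove a statement the theorem does not make.
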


\begin{proof}
The result follows immediately from applying Theorem~\ref{thm:general-theo} below with $f(x_1,x_2)=x_1^2+2x_1x_2$, since the unique positive solution to $f(C,C)=C$ is $C=1/3$.
\end{proof}

\begin{figure}
\centering
\includegraphics[width=0.9\linewidth]{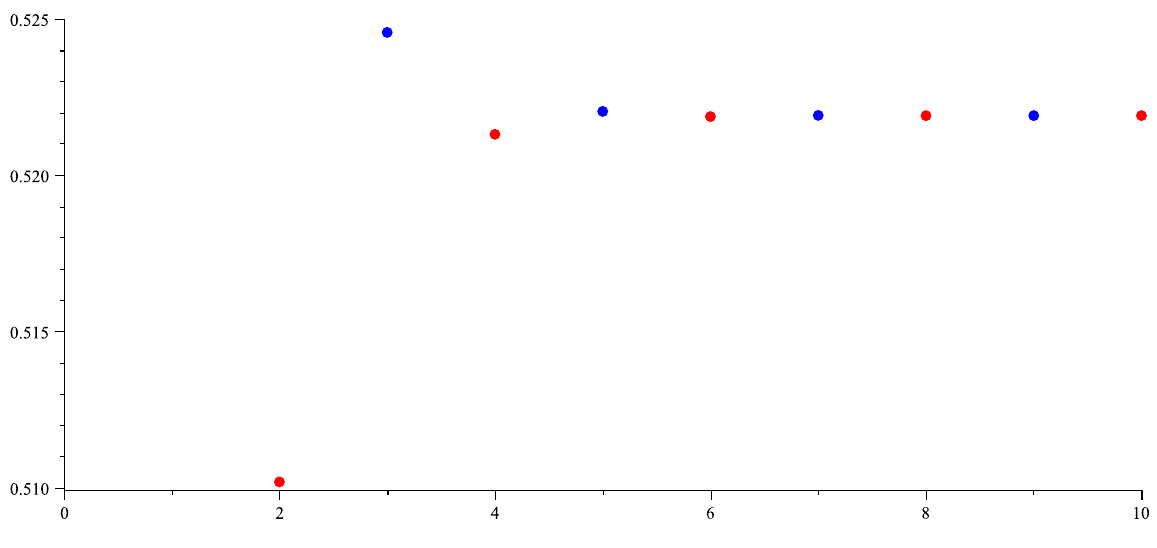}
\caption{Values $\alpha_h$ converging to $\alpha=0.5219\ldots$ monotonically from below among even $h$ (red) and monotonically from above among odd $h$ (blue). }
\label{fig:alphas}
\end{figure}

\begin{remark}
A full proof of the claimed asymptotic behaviour $a_n \sim n^{-1}\alpha^{-n}u(\log n)$ in Odlyzko~\cite{Odlyzko1984}, which characterizes \emph{sub-dominant} asymptotic terms for the bitsize, requires a more intense study of the recursion~\eqref{eq:AVLrec} and is outside the scope of this discussion. It is postponed to future work.
\end{remark}

Our approach derives asymptotics for a family of generating functions satisfying recursive equations similar to~\eqref{eq:AVLrec}. Indeed, inspired by the work of Sedgewick~\cite{Sedgewick2008} on Left-Leaning Red Black Trees, we also study the class of \emph{Left-Leaning AVL (LLAVL) Trees}, which are AVL trees with the added restriction that at every node the height of the left subtree is at least the height of the right subtree. If $L_h(z)$ is the generating function for LLAVL trees with height $h$ then
\begin{align}
L_{h+2}(z)=L_{h+1}(z)(L_{h+1}(z)+L_{h}(z))
\label{eq:LLAVLrec} 
\end{align}
for all $h\geq 0$, as an LLAVL tree of height $h+2$ is a root together with a left subtree of height $h+1$ and a right subtree of height $h+1$ or $h$. Note that the only difference between this recurrence and the recursive equation~\eqref{eq:AVLrec} for AVL trees is the coefficient of $L_h(z)$, since there is now only one way to have an unbalanced pair of subtrees.

\begin{theorem}
\label{LLAVL}
If $\gamma_h$ is the unique positive solution to $L_h(z)=1/2$ then the limit 
\[ \gamma=\lim_{h\to\infty}\gamma_h=0.67418\ldots\] 
is well-defined. Furthermore, the number $\ell_n$ of LLAVL trees on $n$ nodes satisfies
\[\log_2(\ell_n) = \underbrace{n\log_2(\gamma^{-1})}_{n(0.568\ldots)}  + \log_2 s(n) \]
for a function $s$ growing at most sub-exponentially.
\end{theorem}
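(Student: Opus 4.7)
The plan is to prove Theorem~\ref{LLAVL} as a direct corollary of Theorem~\ref{thm:general-theo}, in complete analogy with the deduction of Theorem~\ref{AVL}. The LLAVL recursion~\eqref{eq:LLAVLrec} takes the form $L_{h+2}(z) = f(L_{h+1}(z), L_h(z))$ for the two-variable polynomial
\[ f(x_1, x_2) = x_1^2 + x_1 x_2, \]
which differs from the AVL polynomial $x_1^2 + 2 x_1 x_2$ only in the coefficient of the mixed term. The diagonal equation $f(C,C) = C$ reduces to $2C^2 = C$, whose unique positive solution is $C = 1/2$, exactly matching the threshold used to define $\gamma_h$ via $L_h(z) = 1/2$.

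Once this identification is made, Theorem~\ref{thm:general-theo} immediately delivers both the existence of the limit $\gamma = \lim_{h\to\infty} \gamma_h$ and the decomposition $\log_2(\ell_n) = n\log_2(\gamma^{-1}) + \log \theta(n)$ with $\theta$ sub-exponentially growing. The numerical value $\gamma = 0.67418\ldots$ (and hence the constant $\log_2(\gamma^{-1}) = 0.568\ldots$) is obtained by iterating~\eqref{eq:LLAVLrec} from the explicit initial data for LLAVL trees of heights $0$ and $1$, then numerically solving $L_h(z) = 1/2$ for $h$ sufficiently large; as in Figure~\ref{fig:alphas}, one expects the even- and odd-indexed subsequences of $\gamma_h$ to sandwich $\gamma$ from opposite sides, so any desired precision can be certified by comparing consecutive iterates.

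The only thing that needs explicit verification is that $f(x_1, x_2) = x_1^2 + x_1 x_2$, paired with the LLAVL initial conditions, meets the hypotheses of Theorem~\ref{thm:general-theo}. Specifically, $f$ is a non-constant bivariate polynomial with non-negative coefficients (clear by inspection); the iterates $L_h(z)$ are non-constant polynomials with non-negative coefficients vanishing at $z=0$, so that $\gamma_h$ is well-defined as the unique positive solution of $L_h(z) = 1/2$ by monotonicity (immediate by induction from $L_0(z) = z$ and the form of the recursion); and the fixed point $C = 1/2$ satisfies whatever non-degeneracy or attraction conditions the general theorem requires. Because each of these conditions holds verbatim in the AVL case and depends only on the structural form of $f$, the main obstacle lies entirely in establishing Theorem~\ref{thm:general-theo} itself; the LLAVL specialization is a one-line consequence and mirrors the AVL argument step for step.
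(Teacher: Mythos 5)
Your proposal is correct and follows exactly the same route as the paper: specialize Theorem~\ref{thm:general-theo} to $f(x_1,x_2)=x_1^2+x_1x_2$ and observe that $f(C,C)=C$ forces $C=1/2$. The extra remarks about verifying the hypotheses and numerically approximating $\gamma$ are sound but are not spelled out in the paper either, which treats the LLAVL case as an immediate corollary just as it treats the AVL case.
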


\begin{proof}
The result follows by applying Theorem~\ref{thm:general-theo} below with ${f(x_1,x_2)=x_1^2+x_1x_2}$, since the unique positive solution to $f(C,C)=C$ is $C=1/2$.
\end{proof}

LLAVL trees are also weakly tame, giving the following.

\begin{corollary}\label{coro:LLAVL}
There exists a succinct encoding for LLAVL trees that supports the operations in Table~\ref{tab:ops} in $O(1)$ time using the $(\log n)$-bit word RAM model.
\end{corollary}

The remainder of this paper proves our main results.

\section{A New Succinct Encoding for Weakly Tame Classes}
\label{sec:encoding}

We begin by proving Theorem~\ref{thm:succinct}, first describing our encoding and then showing it has the stated properties.

\subsection{Our encoding}
 
Let $\cal E$ denote a succinct data structure representing all binary trees that supports the operations in Table~\ref{tab:ops}, and denote the encoding of a binary tree $\tau$ in this data structure by ${\cal E}(\tau)$. We now fix a weakly tame class of binary trees $\cal T$ and, given a binary tree $\tau\in \cal T$ of size $n$, define the \emph{upper tree}
\[\tau' =  {\Big\{}v\in \tau:  {\big|}\tau[p(v)]{\big|} \geq d\log_2 n {\Big\}} \] 
where $p(v)$ denotes the parent of a vertex $v$ in the tree $\tau$ and $d$ is a constant such that $B^\prime(n) = d\log_2 n +o(\log n)$ satisfies~\eqref{eq:weakfringe} in the definition of worst-case weakly fringe dominated. 

\begin{figure}\center
\begin{tikzpicture}[level distance=25pt,
every node/.style={circle,draw,inner sep=1pt, minimum size=0.7em},
level 1/.style={sibling distance=192pt},
level 2/.style={sibling distance=96pt},
level 3/.style={sibling distance=48pt},
level 4/.style={sibling distance=24pt},
level 5/.style={sibling distance=12pt}
]
\node[fill=red] {}
    child {node[fill=red] {} 
        child {node[fill=red] {}
            child {node[fill=blue] {}
                        child {node[fill=black] {}}
                child {node[fill=black] {}
                            child {node[fill=black] {}}
                    child {node[fill=black] {}}
                    }
                    }
            child {node[fill=blue] {}
                child {edge from parent[draw=none]}
                child {node[fill=black] {}}
                }
        }
                child {node[fill=blue] {}
            child {node[fill=black] {}
                        child {node[fill=black] {}}
                child {node[fill=black] {}}
                }
            child {node[fill=black] {}}
        }
    }
    child {node[fill=red] {}
        child {node[fill=red] {}
            child {node[fill=blue] {}
                child {node[fill=black] {}
                            child {node[fill=black] {}}
                    child {node[fill=black] {}}
                    }
                child {node[fill=black] {}}
            }
            child {node[fill=blue] {}
                child {node[fill=black] {}}
                child {node[fill=black] {}
                    child {node[fill=black] {}}
                    child {node[fill=black] {}}
                }
            }
        }
        child {node[fill=blue] {}
            child {node[fill=black] {}}
            child {node[fill=black] {}
                child {node[fill=black] {}}
                child {node[fill=black] {}}
            }
        }
    }
;
\end{tikzpicture}\\[1em]

        $\tau = $\, \begin{tikzpicture}
        [level distance=25pt,
every node/.style={circle,draw,inner sep=1pt, minimum size=0.7em},
level 1/.style={sibling distance=96pt},
level 2/.style={sibling distance=48pt},
level 3/.style={sibling distance=24pt},
level 4/.style={sibling distance=12pt},
level 5/.style={sibling distance=12pt}
]
\node[fill=red] {}
    child {node[fill=red] {} 
        child {node[fill=red] {}
            child {node[fill=blue] {}}
            child {node[fill=blue] {}}
        }
                child {node[fill=blue] {}
        }
    }
    child {node[fill=red] {}
        child {node[fill=red] {}
            child {node[fill=blue] {}}
            child {node[fill=blue] {}}
        }
        child {node[fill=blue] {}}
    }
;
\end{tikzpicture}\hfill
        $00 = $\, \begin{tikzpicture}[level distance=25pt,
every node/.style={circle,draw,inner sep=1pt, minimum size=0.7em},
level 1/.style={sibling distance=24pt},
level 2/.style={sibling distance=12pt},
]
\node[fill=blue] {}
    child {node[fill=black] {}}
    child {node[fill=black] {}
        child {node[fill=black] {}}
        child {node[fill=black] {}}
    }
;
\end{tikzpicture}\hfill
$01 = $\,       \begin{tikzpicture}[level distance=25pt,
every node/.style={circle,draw,inner sep=1pt, minimum size=0.7em},
level 1/.style={sibling distance=24pt},
level 2/.style={sibling distance=12pt},
]
\node[fill=blue] {}
    child {edge from parent[draw=none]}
    child {node[fill=black] {}}
;
\end{tikzpicture}\hfill
$10 = $\,       \begin{tikzpicture}[level distance=25pt,
every node/.style={circle,draw,inner sep=1pt, minimum size=0.7em},
level 1/.style={sibling distance=24pt},
level 2/.style={sibling distance=12pt},
]
\node[fill=blue] {}
    child {node[fill=black] {}
        child {node[fill=black] {}}
        child {node[fill=black] {}}
    }
    child {node[fill=black] {}}
;
\end{tikzpicture}\\[1em]
\caption{An illustration of our succinct encoding for weakly tame tree classes, including the names of the shapes of the subtrees rooted at leaves (in blue) of the upper tree $\tau$ (in red and blue).}
\label{fig:encoding}
\end{figure}

Our succinct data structure for $\cal T$ is constructed as follows. 
\begin{enumerate}
    \item We simply copy the encodings ${\cal E}(\tau')$ for upper trees. 
    \item For every $1\leq j< d\log_2 n$ we write down a lookup table mapping the trees in ${\cal T}_j$ (with $j$ nodes) to their corresponding $\cal E$ encoding. We can do this, for example, by enumerating the ${\cal T}_j$ in lexicographic order by the $\cal E$ encoding using integers of bitsize $\log_2 |{\cal T}_j|=cj+o(j)$, where $c$ is the constant in the definition of log-linearity~\eqref{eq:loglinear}.
    \item For each leaf node $\ell \in \tau'$ the fringe subtree $\tau[\ell]$ has size $|\tau[\ell]|< d\log_2 n$ by definition of $\tau'$. We call these trees \emph{lower trees}, and write them down using their encoding in a lookup table in \texttt{leaf\_rank} order of their roots in $\tau'$, storing the root locations in an indexable dictionary.  
    \item Lastly, we store additional information in (fully) indexable dictionaries 
    % cite lemmas from Munro
    to support operations like \texttt{node\_rank/select}, \texttt{level\_succ/pred}, and \texttt{leaf\_rank/select}. For instance, for \texttt{node\_rank/select} we store a fully indexable dictionary that maps the \texttt{node\_rank} for a node in $\tau'$ to the \texttt{node\_rank} of the node in $\tau$. The techniques to support the other operations are similar, and are analogous to constructions used in \cite{He2007, Farzan2014}.
\end{enumerate}

\subsection{Proof of Size and Operation Time Bounds}
Navigation through the upper tree follows standard navigation using $\cal E$, which supports the desired operations in constant time. When a leaf node $\ell$ is reached in the upper tree, the operation $x= \texttt{leaf\_rank}(\ell)$ gives the index of the child tree in the indexable dictionary. Then the operation $\texttt{select}(x)$ gives the location of the string encoding the child tree. Finally, using the table mapping our encoding to the $\cal E$ encoding gives us the ability to perform all the navigation operations on the smaller tree. In order to perform the lookup using the mapping, it is necessary to know the size of the tree. This can be inferred from the space in memory allocated to the naming, which can be calculated by the operation $\texttt{select}(x+1)$ in the indexable dictionary to find the starting location of the next child tree. To navigate back to the upper tree from a child tree, we use the reverse operations of $y=\texttt{rank}(x)$ in the indexable dictionary followed by $\texttt{select\_leaf}(y)$ in the upper tree. 

To get the \texttt{node\_rank} of a node in $\tau'$ we use the fully indexable dictionary, and to get the \texttt{node\_rank} of a node not in $\tau'$ we simply get the \texttt{node\_rank} of the root of the child tree and the \texttt{node\_rank} of the node within the child tree and perform the appropriate arithmetic depending on the desired rank order (\texttt{pre, post, in}). For \texttt{node\_select}, if the node is in $\tau'$ then selecting using the indexable dictionary is sufficient. Otherwise, the node is in a child tree and the initial \texttt{node\_select} will return the predecessor node in $\tau'$ which will be the root of the child tree when using \texttt{preorder} (the argument is similar for \texttt{postorder} and \texttt{inorder}). Using the rank of this root and appropriate arithmetic, we can then select the desired node in the child tree. Implementing the other operations is analogous. It is clear that all of these operations are supported in constant time, since they involve a constant number of calls to the constant-time operations in the existing data structures, and lookups using $(\log n)$-bit words. 

\paragraph*{Space Complexity}
The space used by ${\cal E}(\tau')$ is $o(n)$ by the weakly tame property. The space used by the lookup tables is $O(n^d \log n)=o(n)$ by definition of $\tau'$ and $d$, and the space used by all of the encodings of the child trees is $cn+o(n)$ by log-linearity. Lastly, the space needed for the indexable dictionaries is $o(n)$ for each~\cite[Lemmas 1 and 2]{Farzan2014}. Summing these requirements shows that the total storage required is $cn +o(n)$ many bits, so the encoding is succinct.\qed

\section{Asymptotics for a Family of Recursions}\label{sec:asymp}

As noted above, we derive the asymptotic behaviour of a family of generating functions which includes Theorem~\ref{AVL} as a special case. Let $\mF$ be a combinatorial class decomposed into a disjoint union of finite subclasses $\mF=\bigcup_{h=0}^\infty\mF_h$ whose generating functions $F_h(z)$ are non-constant and satisfy a recursion
\begin{equation}
F_h(z) = f(F_{h-1}(z), F_{h-2}(z),\dots, F_{h-c}(z)) \quad \text{for all} \quad h \geq c,
\label{eq:Frec}
\end{equation} 
where $c$ is a positive integer and $f$ is a multivariate polynomial with non-negative coefficients.

\begin{remark}
 The elements of $\mF_h$ are usually \emph{not} the objects of $\mF$ of size $h$ (in our tree applications they contain trees of height $h$, not trees with $h$ nodes). 
 The fact that each $\mF_h$ is finite implies that the $F_h(z)$ are polynomials with non-negative coefficients. The coefficient of $z^n$ in $F_h(z)$ counts the number of objects of size $n$ within the subclass indexed by $h$ (i.e., the number of trees with $n$ nodes and height $h$ in our applications).
\end{remark}

We assume that there exists a (necessarily unique) positive real solution $C$ to the equation $C=f(C,C,\dots, C)$, which we call a \emph{fixed point} of $f$, and for each $h\geq0$ we let $\alpha_h$ be the unique positive real solution to $F_h(z)=C$. In order to rule out degenerate cases and cases where the counting sequence has periodic behaviour, we need another definition.

\begin{definition}(recursive-dependent)
\label{as:recdep}
We call the polynomial $f$ \emph{recursive-dependent} if there exists a constant $k$ (depending only on $f$) such that for any indices $i,j\geq c$ with $i\geq j+k$ there exists a sequence of applications of the recurrence~\eqref{eq:Frec} resulting in a polynomial $P$ with $F_i = P(F_{\ell_1},\dots,F_{\ell_m})$ for some $0\leq \ell_1<\dots<\ell_m\leq i$ where $\frac{\partial{P}}{\partial{F_j}}$ is not the zero polynomial. 
\end{definition}

\begin{example}
The polynomial $f(x,y) = y$ is not recursive-dependent because it leads to the recursion $F_h(z)=F_{h-2}(z)$, meaning that the values of $F_h$ when $h$ is even can be independent of those where $h$ is odd. In particular, for any positive integer $n$ we cannot express $F_{2n}$ as a polynomial involving $F_1,F_3,\dots,F_{2n-1}$.
\end{example}

\begin{lemma}\label{lem:alphaexists}
If $f$ is recursive-dependent with non-negative coefficients and a positive fixed point then the limit $\alpha=\lim_{h\to\infty}\alpha_h$ exists. 
\end{lemma}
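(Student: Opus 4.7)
My proof plan rests on three steps: establishing that the $\alpha_h$ are well-defined and bounded, analyzing the iterated dynamics of $f$ to identify a candidate limit, and invoking recursive-dependence to enforce uniqueness.

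First, I would show by induction on $h$, using the recursion and the non-negativity of the coefficients of $f$, that each $F_h$ is a polynomial with non-negative coefficients and (by hypothesis) non-constant, hence strictly increasing and continuous on $[0,\infty)$ with $F_h(z)\to\infty$. Provided $F_h(0)<C$ for all sufficiently large $h$, which follows from the fact that $F_h(0)$ itself satisfies the same recursion with fixed point $C$, the equation $F_h(z)=C$ has a unique positive root $\alpha_h$. A short compactness argument then traps $(\alpha_h)$ in a compact subinterval $[m,M]\subset(0,\infty)$: a subsequence $\alpha_{h_k}\to 0$ would force $F_{h_k}(\alpha_{h_k})$ to lie near a subcritical attractor strictly below $C$, and $\alpha_{h_k}\to\infty$ is ruled out analogously using that $f$ eventually exceeds $C$ once its inputs do.

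Second, I would study the iterated map $T\colon (y_0,\dots,y_{c-1})\mapsto (y_1,\dots,y_{c-1},f(y_{c-1},\dots,y_0))$ on $[0,\infty)^c$, noting that non-negativity of the coefficients of $f$ makes $T$ coordinatewise monotone with $(C,\dots,C)$ as its unique interior fixed point. For fixed $z>0$, the sequence $y_h:=F_h(z)$ is the orbit of $(F_0(z),\dots,F_{c-1}(z))$ under $T$. Monotonicity of $T$ and of each $F_h$ in $z$ partitions $(0,\infty)$ into a subcritical region $Z_<$ (orbit eventually below $C$), a supercritical region $Z_>$ (orbit eventually above $C$), and a critical boundary $Z_{\mathrm{crit}}$; the candidate limit is $\alpha=\sup Z_<=\inf Z_>$.

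Third, I would close the argument by showing $Z_{\mathrm{crit}}=\{\alpha\}$ using recursive-dependence. The hypothesis ensures that some iterate $T^k$ has the property that each coordinate of $T^k(y)$ depends nontrivially on each coordinate of $y$ near the fixed point; hence the Jacobian of $T^k$ at $(C,\dots,C)$ has strictly positive entries, making $T^k$ strongly monotone in a Perron--Frobenius sense and forcing any bounded orbit pinned at height $C$ to converge to the fixed point $(C,\dots,C)$. Once the critical set collapses to a single point $\alpha$, the intermediate value theorem gives $\alpha_h\in(\alpha-\varepsilon,\alpha+\varepsilon)$ for every $\varepsilon>0$ and all large $h$, and hence $\alpha_h\to\alpha$. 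The main obstacle is precisely this uniqueness step: the degenerate example $f(x,y)=y$ in the preceding paragraph shows that without recursive-dependence the iterated dynamics can decouple into independent residue classes mod $c$, yielding a whole interval of boundary values and letting $\alpha_h$ fail to converge. The crux will be translating the formal recursive-dependence condition of Definition~\ref{as:recdep} into a genuinely non-degenerate Jacobian for some iterate of $T$, so that standard monotone-dynamical-systems arguments pin $Z_{\mathrm{crit}}$ down as a singleton.
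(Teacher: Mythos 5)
Your plan is a genuinely different route from the paper's, and it has a real gap at exactly the step you flag as the crux. The paper never analyses the shift map $T$ at all; it works directly with the root sequence $(\alpha_h)$. It extracts interleaved subsequences of window maxima $\alpha_{u_i}$ and window minima $\alpha_{\ell_j}$ over blocks of $c$ consecutive indices, shows via the recursion and the monotonicity of each polynomial $F_h$ that $(\alpha_{u_i})$ is non-increasing and $(\alpha_{\ell_j})$ non-decreasing (hence both converge, to limits $\alpha_u \ge \alpha_\ell$), and then invokes recursive-dependence only once, in an explicit $\epsilon$-estimate, to derive a contradiction from $\alpha_u - \alpha_\ell > 0$ by writing $C = F_{\ell_j}(\alpha_{\ell_j}) = P(F_{u_i}(\alpha_{\ell_j}), \ldots)$ and letting $\epsilon\to 0$.

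The gap in your proposal: positivity of the Jacobian of some iterate $T^m$ at $(C,\ldots,C)$ would indeed make $T^m$ strongly monotone, but strong monotonicity does not by itself force a ``boundary'' orbit (one that is neither eventually below nor eventually above the diagonal at height $C$) to converge to the interior fixed point. Standard monotone-dynamical-systems theorems (Hirsch, Smith--Thieme) give convergence of \emph{generic} orbits and rule out attracting cycles, but the orbits you need to control are precisely the non-generic pinned ones, which can a priori accumulate on a nontrivial periodic orbit or a more complicated $\omega$-limit set; you would need an additional argument to exclude this, and that argument is where recursive-dependence must actually be used -- you have not supplied it. There is also a subsidiary translation issue: Definition~\ref{as:recdep} only concerns indices $i,j\geq c$, so converting it into strict positivity of the Jacobian of some $T^m$ at $(C,\ldots,C)$ takes an extra lemma (it does go through, because the non-negative coefficients prevent cancellation when a partial expansion is further refined, but it is not immediate). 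By contrast, the paper's formulation of recursive-dependence feeds directly into the $\epsilon$-squeeze without ever passing through a local linearization.
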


\begin{figure}
\centering
\includegraphics[width=0.9\linewidth]{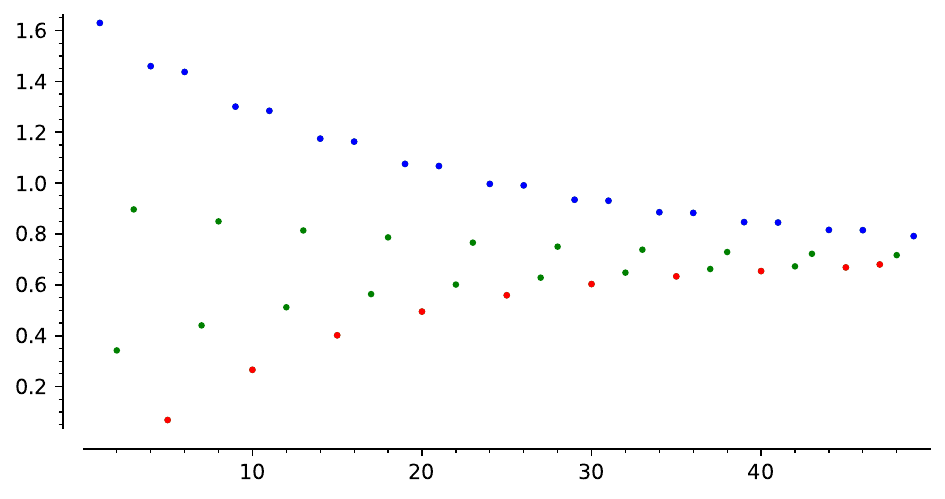}
\caption{Values $\alpha_i$ converging with $u_i$s shown in blue and $\ell_j$s shown in red. }\label{fig:alpha-ul}
\end{figure}

\begin{proof}
We start by defining two subsequences of $\alpha_h$ to give upper and lower bounds on its limit, then prove that these are equal. First, we let
\begin{itemize}
    \item $u_0$ be the smallest index $j \in \{0,\dots,c-1\}$ such that $\alpha_j = \max\{\alpha_0,\dots,\alpha_{c-1}\}$
\end{itemize}
and for all $i\geq0$ let
\begin{itemize}
    \item $u_{i+1}$ be the smallest index $j \in \{u_i+1,\dots,u_i+c\}$ such that $\alpha_j = \max\{\alpha_{u_i+1},\dots,\alpha_{u_i+c}\}$,
\end{itemize}
so that the $u_i$ denote the indices of the maximum values of the $\alpha_h$ as $h$ ranges over intervals of size at most $c$. Conversely, we let
\begin{itemize}
    \item $\ell_0$ be the index $j \in \{0,\dots,c-1\}$ such that $\alpha_j = \min\{\alpha_0,\dots,\alpha_{c-1}\}$
\end{itemize}
and for all $j\geq0$ let
\begin{itemize}
    \item $\ell_{i+1}$ be the index $j \in \{u_i+1,\dots,u_i+c\}$ such that $\alpha_j = \min\{\alpha_{u_i+1},\dots,\alpha_{u_i+c}\}$,
\end{itemize}
so that the $\ell_j$ denote the indices of the minimum values of the $\alpha_h$ as $h$ ranges over intervals of size at most~$c$.

We claim that the subsequence $\alpha_{u_i}$ is non-increasing. To establish this, we fix $i\geq1$ and consider $\alpha_{u_i}$. By definition, $\alpha_{u_i} \geq \alpha_{u_j}$ for all $j \in \{u_{i-1}+1,\dots,u_{i-1}+c\}$. Thus, if $u_{i+1} \in \{u_{i-1}+1,\dots,u_{i-1}+c\}$ then $\alpha_{u_i} \geq \alpha_{u_{i+1}}$ as claimed. If, on the other hand, $u_{i+1} > u_{i-1}+c$ then repeated application of the recursion~\eqref{eq:Frec} implies
\begin{align*}
F_{u_{i+1}}(\alpha_{u_i}) &= f{\Big(}F_{u_{i+1}-1}(\alpha_{u_i}),\dots,F_{u_{i+1}-c}(\alpha_{u_i}){\Big)} \\
& \;\; \vdots \\
&= Q{\Big(}F_{u_{i-1}+1}(\alpha_{u_i}),\dots,F_{u_{i-1}+c}(\alpha_{u_i}){\Big)},
\end{align*}
where $Q$ is a multivariate polynomial with non-negative coefficients such that $Q(C,\dots,C)=C$. All the $F_h$ are monotonically increasing as non-constant polynomials with non-negative coefficients, so $F_j(\alpha_{u_i}) \geq F_j(\alpha_{u_j}) = C$ for all $j \in \{u_{i-1}+1,\dots,u_{i-1}+c\}$ and
\[ F_{u_{i+1}}(\alpha_{u_i}) \geq Q{\Big(}C,\dots,C{\Big)} = C.\]
Since $F_{u_{i+1}}$ is monotonically increasing and $F_{u_{i+1}}(\alpha_{u_{i+1}})=C$, we once again see that $\alpha_{u_i} \geq \alpha_{u_{i+1}}$. As $i$ was arbitrary, we have proven that $\alpha_{u_i}$ is non-increasing. The same argument, reversing inequalities, proves that the subsequence $\alpha_{\ell_j}$ is non-decreasing. 

As $\alpha_{\ell_j}$ is non-decreasing and $\alpha_{u_i}$ is non-increasing, either $\alpha_{\ell_j}\leq \alpha_{u_i}$ for all $i,j\geq 0$ or $\alpha_{\ell_j} > \alpha_{u_i}$ for all sufficiently large $i$ and $j$. The second case implies the existence of indices $a,b>0$ such that $\alpha_{\ell_b} > \alpha_{u_a}$ but $\ell_b \in \{u_{a-1}+1,\dots,u_{a-1}+c\}$ so that $u_a$ is not the maximum index of $\alpha_j$ in this range, giving a contradiction. Thus, $\alpha_{\ell_j}\leq \alpha_{u_i}$ for all $i,j\geq 0$ and the limits
\[ 
\alpha_u = \lim_{i\rightarrow\infty} \alpha_{u_i} \quad\text{and}\quad \alpha_\ell = \lim_{j\rightarrow\infty} \alpha_{\ell_j} 
\]
exist. To prove that the limit of $\alpha_h$ exists as $h\rightarrow\infty$, it is now sufficient to prove that $\alpha_u = \alpha_\ell$.

Suppose toward contradiction that $\alpha_u\neq \alpha_\ell$, and define $a=\alpha_u-\alpha_\ell>0$. For any $\epsilon > 0$, we pick $i,j,k$ sufficiently large so that $\ell_j>u_i>\ell_k+c$ and $|\alpha_{u_i}-\alpha_u|,|\alpha_{\ell_j}-\alpha_\ell|,|\alpha_{\ell_k}-\alpha_\ell|<\epsilon$. Then by recursive-dependence we can recursively decompose $F_{\ell_j}$ in terms of $F_{u_i}$, and possibly some other terms $F_{h_1},\dots,F_{h_r}$ where each $|h_n-u_i|\leq c$, to get
\[ C = F_{\ell_j}(\alpha_{\ell_j}) = P(F_{u_i}(\alpha_{\ell_j}), F_{h_1}(\alpha_{\ell_j}),\dots, F_{h_r}(\alpha_{\ell_j})) \]
where $P(F_{u_i},F_{h_1},\dots,F_{h_r})$ is a polynomial with non-negative coefficients that depends on $F_{u_i}$ and satisfies $P(C,\dots,C)=C$. Because $P$ is monotonically increasing in each coordinate, and $\alpha_{\ell_k}+\epsilon > \alpha_\ell \geq \alpha_{\ell_j}$, we see that
\[ C \leq P(F_{u_i}(\alpha_{\ell_k}+\epsilon), F_{h_1}(\alpha_{\ell_k}+\epsilon),\dots, F_{h_r}(\alpha_{\ell_k}+\epsilon)). \]
Furthermore, each $\alpha_{h_n}\geq\alpha_{\ell_k}$ so
\begin{align*}
C 
&\leq P(F_{u_i}(\alpha_{\ell_k}+\epsilon), F_{h_1}(\alpha_{h_1}+\epsilon),\dots, F_{h_r}(\alpha_{h_r}+\epsilon)) \\
&\leq P(F_{u_i}(\alpha_{\ell_k}+\epsilon), C + \mathrm{poly}(\epsilon),\dots, C + \mathrm{poly}(\epsilon)).
\end{align*}
Finally, $\alpha_{u_i}-a \geq \alpha_{\ell_k}$ so
\[ C \leq P(F_{u_i}(\alpha_{u_i}-a+\epsilon), C + \mathrm{poly}(\epsilon),\dots, C + \mathrm{poly}(\epsilon)). \]

Because $a$ is fixed, $P$ is monotonically increasing in each variable, and $F_{u_i}(\alpha_{u_i})=C$, taking $\epsilon\rightarrow0$ shows that the right-hand side of this last inequality is strictly less than $P(C,\dots,C)=C$, a contradiction.
Thus, $a=0$ and the limit $\alpha=\alpha_u=\alpha_\ell$ exists.
\end{proof}

\begin{theorem}\label{thm:general-theo}
If $f$ is recursive-dependent with non-negative coefficients and a positive fixed point, then the number $a_n$ of objects in $\mF$ of size $n$ satisfies
$$a_n = \alpha^{-n} \, s(n),$$ 
where $\alpha$ is the limit described in Lemma~\ref{lem:alphaexists} and $s(n)$ is a function growing at most sub-exponentially. 
\end{theorem}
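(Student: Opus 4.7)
The plan is to identify the radius of convergence $\rho$ of the generating function
\[F(z) \;=\; \sum_{h=0}^{\infty} F_h(z) \;=\; \sum_{n=0}^{\infty} a_n z^n\]
with the limit $\alpha$ of Lemma~\ref{lem:alphaexists}. Once $\rho = \alpha$ is established, the Cauchy--Hadamard formula gives $\limsup_n a_n^{1/n} = 1/\alpha$, so $\theta(n) := a_n \alpha^n$ satisfies $\limsup \theta(n)^{1/n} = 1$, which is exactly the claimed sub-exponential condition $\theta(n) = o(\kappa^n)$ for every $\kappa > 1$.

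For the upper bound $\rho \leq \alpha$, Lemma~\ref{lem:alphaexists} shows that $\alpha_{\ell_j}$ is non-decreasing with limit $\alpha$, so $\alpha_{\ell_j} \leq \alpha < z$ whenever $z > \alpha$. Each $F_h$ is a non-constant polynomial with non-negative coefficients, hence strictly increasing on $[0,\infty)$, so $F_{\ell_j}(z) > F_{\ell_j}(\alpha_{\ell_j}) = C$ for every $j$. Because $\{\ell_j\}$ is an infinite strictly increasing sequence of indices, $F(z) \geq \sum_j F_{\ell_j}(z) = \infty$, so the series $\sum_n a_n z^n$ diverges at every $z > \alpha$.

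For the lower bound $\rho \geq \alpha$, fix $z_0 < \alpha$. Sandwiching each $\alpha_h$ between the $\alpha_{\ell_j}$ and $\alpha_{u_i}$ from the surrounding windows of Lemma~\ref{lem:alphaexists} yields $\alpha_h \to \alpha$, so $F_h(z_0) < C$ for all $h$ sufficiently large. Letting $g(t) := f(t,\dots,t)$, the polynomial $g$ is convex and increasing on $[0,\infty)$ with $g(C) = C$, and the hypotheses that $C$ is the unique positive fixed point and that $f$ has non-negative coefficients (together with the natural assumption $f(0,\dots,0)=0$ needed for $a_0 < \infty$) force both $g(t) < t$ on $(0,C)$ and the linearization bound $g'(0) < 1$. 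A window argument on $M_h := \max_{h-c+1 \leq k \leq h} F_k(z_0)$ using $F_{h+1}(z_0) \leq g(M_h)$ shows that $M_h$ is eventually non-increasing and tends to $0$; once $M_h$ is small enough that the linear estimate $g(t) \leq r\,t$ with some $r < 1$ applies, iterating one full cycle of length $c$ contracts $M_h$ by the factor $r$, yielding $M_{h+kc} \leq r^k M_h$ and hence $\sum_h F_h(z_0) < \infty$.

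The main obstacle is the convergence half, specifically extracting the strict inequality $g(t) < t$ on $(0,C)$ and the linearization bound $g'(0) < 1$ from the structural hypotheses of the theorem, and then promoting the pointwise decay $F_h(z_0) \to 0$ to a summable geometric rate through the $c$-term recurrence. With these ingredients in hand, the remainder of the argument is a direct application of Cauchy--Hadamard.
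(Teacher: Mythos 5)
Your proposal is correct and follows essentially the same strategy as the paper: identify the exponential growth rate $\alpha^{-1}$ by proving that $\alpha$ is the radius of convergence of $F(z)=\sum_h F_h(z)$, with the key step being that once a window of $c$ consecutive values $F_h(z_0)$ is strictly below $C$, the sequence decays geometrically because the degree-one part of $f$ has coefficient sum less than $1$. The differences are in execution, not in substance. The paper passes directly to an explicit recursive bound: writing $A$ for the sum of the linear coefficients of $f$, it shows $f(\lambda C,\dots,\lambda C)\le (A\lambda+\lambda^2(1-A))C$ and iterates the scalar recursion $\lambda_{i}=\lambda_{i-1}(A+\lambda_{i-1}-A\lambda_{i-1})$ to get the geometric bound $\lambda_i\le\lambda(A+\lambda-A\lambda)^i$ in one shot. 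You instead track the window maximum $M_h=\max_{h-c+1\le k\le h}F_k(z_0)$, first argue qualitatively that $M_h$ is non-increasing and tends to $0$ (via $g(t)<t$ on $(0,C)$), and then switch to a linear contraction $g(t)\le rt$ near the origin; your $M_{ch+k}/C$ is exactly the paper's $\lambda_h$, just obtained in two stages rather than one. The divergence half you give explicitly (infinitely many indices $\ell_j$ with $\alpha_{\ell_j}\le\alpha$) is only asserted in the paper as ``$|F(z)|\to\infty$ as $z\to\alpha$''. The two facts you flag as the ``main obstacle'' --- $g(t)<t$ on $(0,C)$ and $g'(0)<1$, equivalently $A<1$ --- do indeed need the argument you hint at (constant term zero, and uniqueness of the positive fixed point forcing the linear coefficient sum below $1$); the paper states $A<1$ as an immediate consequence of the hypotheses without spelling this out, so your version is if anything more explicit about that point. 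Net effect: same proof, the paper's being a slightly more compressed quantitative version of your two-stage contraction.
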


\begin{proof}
We prove that the generating function $F(z)$ is analytic for $|z|<\alpha$ by showing that the series $\sum_{h=0}^\infty F_h(z)$ converges for these values of $z$. Because $|F(z)|\rightarrow\infty$ as $z\rightarrow\alpha$, the point $z=\alpha$ is then a singularity of $F(z)$ of smallest modulus, and thus (by the root test for series convergence) the reciprocal of the exponential growth of $a_n$.

First, assume that there exists some $k\geq 0$ and $0<\lambda<1$ such that $F_h(z)<\lambda C$ for every $h\in \{k,k+1,\dots, k+c-1\}$. Let $A$ be the sum of the coefficients of all degree 1 terms of $f$. Since $f$ has non-negative coefficients and a positive real fixed point, we must have $A<1$. Let $g(x_1,\dots, x_c)$ be the function created by removing all degree one terms from $f$. Observe that $C=AC+g(C,\dots, C)$, and thus $g(\lambda C,\dots, \lambda C)\leq \lambda^2g(C,\dots,C) = \lambda^2(1-A)C$, so that
\[f(\lambda C,\dots, \lambda C)\leq A\lambda C + \lambda^2(1-A)C.\] 
Algebraic manipulation shows that $A \lambda + \lambda^2(1-A)\leq \lambda$, and since $f$ has non-negative coefficients we can conclude that for every $h\in \{k+c,k+1+c,\dots, k+2c-1\}$ we have $F_h(z) \leq A\lambda C + \lambda^2(1-A)C$. Let $\lambda_0=\lambda$ and define $\lambda_i= \lambda_{i-1}(A + \lambda_{i-1}-A\lambda_{i-1})$ for all $i\geq 1$. By the above argument we have 
\[F_{ch+k}(z)\leq \lambda_h C,\] 
so it remains to show that $\sum_{i=0}^\infty \lambda_i$ converges. We will show that $\lambda_i\leq \lambda(A+\lambda-A\lambda)^i$ by induction on $i$. The result holds by definition for $i=1$. If the result holds for some $j\geq 1$ then
\begin{align*}
\lambda_{j+1} &= \lambda_j (A + \lambda_j - A \lambda_j) \\
&\leq \lambda(A+\lambda-A\lambda)^j(A+\lambda_j-A\lambda_j)\\
&\leq\lambda(A+\lambda-A\lambda)^{j+1}, 
\end{align*}
where the last inequality follows from the fact that $\lambda_j<\lambda$ since
$A+\lambda-A\lambda<1$. 
The sum $\sum_{i=0}^\infty \lambda (A+\lambda-A\lambda)^i$ converges as a geometric series, and thus 
$\sum_{h=0}^\infty F_h(z)$ converges. 

It remains to show that if $|z|<\alpha$ then such a $k$ and $\lambda$ exist. For any $|z|<\alpha$ there is some $N$ sufficiently large such $|z|< \alpha_n$ for all $n\geq N$. By the definition of $\alpha_n$, and since the coefficients of $F_n$ are all positive, we must have $F_n(z)<C$. Hence $F_n(z)<\lambda_n C$ for some $0<\lambda_n < 1$. Taking $k=N$ and letting $\lambda$ be the largest $\lambda_n$ for $n\in \{N, N+1,\dots, N+c-1\}$ proves our final claim.
\end{proof}

\section{Acknowledgements}
The authors thank Andrew Odlyzko for discussions on the asymptotic behaviour of AVL trees and the growth constant $\alpha$, thank Sebastian Wild for alerting us to some relevant references, and thank the referees for their comments.

\bibliographystyle{plain}
\bibliography{bibl}

\end{document}